\newtheorem{theorem}{Theorem}
\newtheorem{lemma}[theorem]{Lemma}
\title{Disproof of a conjecture by Rademacher on partial fractions}
\author{Michael Drmota}
\address{Vienna University of Technology, Wiedner Hauptstra\ss{}e 8--10/105-1,
A-1040 Vienna, Austria}
\email{michael.drmota@tuwien.ac.at}
\author{Stefan Gerhold}
\address{Vienna University of Technology, Wiedner Hauptstra\ss{}e 8--10/105-1,
A-1040 Vienna, Austria}
\email{sgerhold@fam.tuwien.ac.at}
\thanks{S.~Gerhold gratefully acknowledges financial support from the
Austrian Science Fund (FWF) under grant P~24880-N25.}
\date{\today}
\begin{document}

\begin{abstract}
  In his book Topics in Analytic Number Theory,
  Rademacher considered the generating function
  of partitions into at most $N$ parts, and conjectured certain limits
for the coefficients of its partial fraction decomposition. We carry out an asymptotic
analysis that disproves this conjecture, thus confirming recent observations
of Sills and Zeilberger (Journal of Difference Equations and Applications 19, 2013),
who gave strong  numerical evidence against the conjecture.
\end{abstract}

\keywords{Integer partitions, partial fraction decomposition, Mellin transform,
polylogarithm, saddle point asymptotics}

\subjclass[2010]{11P82, 41A60} 

\maketitle

\section{introduction}

In his book Topics in Analytic Number Theory~\cite{Ra73},
Rademacher gave a partial fraction decomposition of the
partition generating function $\prod_{j\geq 1}(1-x^j)^{-1}$.
He conjectured that the decomposition
of the generating function of partitions into at most~$N$
parts,
\[
  \prod_{j=1}^N \frac{1}{1-x^j} = \sum_{k=1}^N
    \sum_{\substack{0\leq h<k \\ \gcd(h,k)=1}} \sum_{l=1}^{\lfloor N/k \rfloor}
    \frac{C_{h,k,l}(N)}{(x-e^{2\pi i h/k})^l},
\]
is consistent with it in the sense that the coefficients~$C_{h,k,l}(N)$
converge as $N\to\infty$ to the coefficients of the decomposition
of the unrestricted generating function.
Despite attracting the attention of several authors~\cite{An07,Eh93,Mu08},
the conjecture has been open since 1973.
See Sills and Zeilberger~\cite{SiZe13} for some further historical remarks.
The latter paper presents a recurrence
for $C_{0,1,l}(N)$; the values computed by it
do not seem to show convergence, but 
rather  oscillating and unbounded behavior.
It is well known, though, that there are number-theoretic problems
where the true asymptotics are numerically visible only
for \emph{very} large values. See, e.g.,~\cite{Ge10a} for an example.
The present note rigorously confirms the main observation
from~\cite{SiZe13}, i.e., we disprove Rademacher's
conjecture.\footnote{It is important to note that Cormac O'Sullivan
has disproved Rademacher's conjecture independently from us,
as announced  in his paper~\cite{oS12}, with a different approach.
More precisely, he proved that there exist $h,k \le 100$ such
that $C_{h,k,1}(N)$  does not converge (personal communication), whereas our method
proves directly a conjectural relation from~\cite{oS12} (Conjecture 6.2)
-- with a slightly worse error term.
}
To formulate our main result, recall the definition of the dilogarithm function: $\mathrm{Li}_2(w)
= \sum_{k\geq1}w^k/k^2$, $|w|<1$.  Define $z_0\approx -1.61 + 7.42i$
as the solution of
\begin{equation}\label{eq:z0}
  \log(1-e^z) + \frac{1}{z}(\mathrm{Li}_2(e^z)-\pi^2/6) = 0.
\end{equation}
(It is easy to show that there is a unique root within, say, distance
$1$ of the numerical value given above.)
Furthermore, define $\rho=\exp(i a)$, where
\begin{equation}\label{eq:a}
  a = \frac{ \pi}{2} - \frac12 \arg\left( \frac{e^{z_0}}{z_0(1-e^{z_0})} \right) \approx 1.79.
\end{equation}
\begin{theorem}\label{thm:main}
    For any integer $l\geq1$, we have the asymptotics
    \begin{equation}\label{eq:C asympt}
        C_{0,1,l}(N) = b^N N^{-l-1} H_l(N) + O(b^N N^{-l-117/112}), \quad N\to\infty, 
    \end{equation}
    where $b=1/|1-e^{z_0}|\approx 1.07$, and~$H_l$ is a bounded function with period
   $p=2\pi/|\arg(1-e^{z_0})|\approx 31.96$, given by
   \begin{multline*}
       H_l(N) = \frac{(-1)^{l-1}}{\pi} 
       \sqrt{-\frac {z_0(1-e^{z_0})}{\rho^2e^{z_0}}}
       \Bigg(\Im\left( \frac{\rho(-z_0)^{l-1/2}}{\sqrt{1-e^{z_0}}}\right)\cos\big(N \arg(1-e^{z_0})\big) \\
        -\Re\left(\frac{\rho(-z_0)^{l-1/2}}{\sqrt{1-e^{z_0}}}\right)\sin\big(N \arg(1-e^{z_0})\big)\Bigg).
   \end{multline*}
\end{theorem}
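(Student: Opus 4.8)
The plan is to start from an exact integral representation for $C_{0,1,l}(N)$ obtained by Cauchy's formula. Since $C_{0,1,l}(N)$ is, up to sign and a factorial, the coefficient of $(x-1)^{-l}$ in the partial fraction decomposition, it equals a high-order residue of $\prod_{j=1}^N(1-x^j)^{-1}$ at $x=1$; equivalently, after the substitution $x=e^{-t/N}$ (or a similar rescaling near the pole at $x=1$), one gets $C_{0,1,l}(N)$ as a contour integral of the form
\[
  C_{0,1,l}(N) = \frac{1}{2\pi i}\oint \Phi_N(t)\, \frac{\dd t}{t^{\,l}},
\]
where $\Phi_N(t)$ involves $\prod_{j=1}^N (1-e^{-jt/N})^{-1}$. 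The first key step is to find the asymptotics of $\log \prod_{j=1}^N(1-e^{-jt/N})^{-1}$ as $N\to\infty$ for $t$ in a suitable region. This is a textbook application of the Mellin transform / Euler–Maclaurin technique: the sum $\sum_{j=1}^N \log(1-e^{-jt/N})^{-1}$ behaves like $N$ times an integral $\int_0^1 \log(1-e^{-ts})^{-1}\,\dd s = -\frac{1}{t}(\mathrm{Li}_2(1)-\mathrm{Li}_2(e^{-t})) = \frac{1}{t}(\mathrm{Li}_2(e^{-t})-\pi^2/6)$ plus lower-order corrections (a $\tfrac12\log(\,\cdot\,)$ boundary term from Euler–Maclaurin, giving the $\log(1-e^{-t})$ piece, plus an $O(1/N)$ error governed by the next Bernoulli term). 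Writing $\Phi_N(t)\approx \exp\big(N f(t)\big)\cdot g(t)\cdot(1+O(1/N))$ with
\[
  f(t) = \frac{1}{t}\Big(\mathrm{Li}_2(e^{-t})-\frac{\pi^2}{6}\Big)-\log(1-e^{-t})
\]
shows that the saddle-point equation $f'(t)=0$ is exactly equation~\eqref{eq:z0} after the reflection $t\mapsto -z$ (note $f'(t)=0$ rearranges to $\log(1-e^{-t}) + \frac1t(\mathrm{Li}_2(e^{-t})-\pi^2/6)=0$, matching~\eqref{eq:z0} with $z_0=-t_0$).

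The second step is the saddle-point analysis proper. One deforms the contour of integration to pass through the saddle points. Because $f$ is real-analytic in the sense that $\overline{f(\bar t)}=f(t)$, the relevant saddles come in a conjugate pair $\pm$ (i.e.\ $-z_0$ and $-\bar z_0$); only one of them, together with its conjugate, contributes the dominant term, and it is the interplay of the two conjugate saddles that produces the oscillatory factor $H_l(N)$ and the period $p$. The growth rate is $b = e^{\Re f(-z_0)}$; a short computation using the saddle-point equation to eliminate the dilogarithm from $f(-z_0)$ reduces $f(-z_0)$ to $-\log(1-e^{z_0})$, so $b=1/|1-e^{z_0}|$ and the oscillation frequency is $\arg(1-e^{z_0})$, matching the statement. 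Expanding $f$ to second order at the saddle contributes the Gaussian factor $\sqrt{2\pi/(N f''(-z_0))}$, and $f''(-z_0)$ can again be simplified using~\eqref{eq:z0}; this is where the square-root prefactor $\sqrt{-z_0(1-e^{z_0})/(\rho^2 e^{z_0})}$ and the constant $a$ (equivalently $\rho$) in~\eqref{eq:a} come from — $\rho$ is precisely the unit-modulus number that fixes the branch of $\sqrt{f''(-z_0)}$, i.e.\ $a$ is chosen so that $\rho^2$ cancels the argument of $f''(-z_0)$. The factor $t^{-l}$ in the integrand evaluated at the saddle gives $(-z_0)^{-l}$, but combined with the Gaussian normalization one gets the half-integer power $(-z_0)^{l-1/2}$ appearing in $H_l$; the $N^{-l-1}$ scaling in~\eqref{eq:C asympt} is the product of the $N^{-1/2}$ from the Gaussian, the overall $N$-dependent normalization from the change of variables $x=e^{-t/N}$ (which contributes $N^{-l}$ from $\dd t^{l}$, roughly), and the $1/N$ boundary discrepancy. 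Assembling real and imaginary parts of the two conjugate-saddle contributions yields exactly the displayed trigonometric expression for $H_l(N)$.

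The third step is the error analysis, which is the main obstacle and the reason for the slightly awkward exponent $117/112$. One must (a) justify the contour deformation, checking that the parts of the contour away from the saddles contribute exponentially less than $b^N$ — this requires a careful study of $\Re f$ along the deformed path and away from $t=0$ and $t=\infty$, where $\Phi_N$ has singularities; (b) control the error in the Euler–Maclaurin / Mellin expansion of $\log\Phi_N(t)$ \emph{uniformly} for $t$ on the relevant contour, including near the saddle; and (c) handle the central approximation near the saddle with enough terms that the next-order correction (of relative size $O(1/N)$, which would normally give an error $b^N N^{-l-3/2}\cdot$const relative to the main term, hence $b^N N^{-l-2}$) together with the tails and the Euler–Maclaurin remainder combine to the stated $O(b^N N^{-l-117/112})$. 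The peculiar exponent $117/112 = 1 + 5/112$ suggests the bound is not the naive one: it likely arises from optimizing the length of the central portion of the contour against competing error contributions (e.g.\ splitting the contour at distance $N^{-\alpha}$ from the saddle and balancing a term like $N^{-3\alpha}$-type Taylor error against an $N^{\beta\alpha}$-type tail), rather than from a clean argument. I would set up the estimate with a free parameter for the contour split, track all error sources, and optimize at the end; getting the uniform estimates in (b) across the whole range — in particular near $t\to 0$ where individual factors $1-e^{-jt/N}$ become small — is where the real work lies, and the final exponent is whatever that optimization delivers.
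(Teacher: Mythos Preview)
Your high-level plan is the paper's: Cauchy integral around the pole, Mellin-type asymptotics for $\log\prod_{j=1}^N(1-e^{zj/N})^{-1}$, saddle-point analysis at the conjugate pair $z_0,\bar z_0$, and tail estimates yielding the exponent $117/112$. A few concrete points to correct. First, the function multiplied by $N$ in the exponent is only $\tfrac{1}{z}(\mathrm{Li}_2(e^z)-\pi^2/6)$; the $\log(1-e^z)$ appears as an $O(1)$ boundary correction (with coefficient $\tfrac12$, not $1$) and belongs in $g$, not $f$. With the $f$ you wrote, $f'=0$ does \emph{not} reduce to~\eqref{eq:z0}; it is the derivative of the dilogarithm piece alone that gives~\eqref{eq:z0}. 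Second, the paper uses the Mellin transform proper (line shifted to $\Re s=8/7$), not Euler--Maclaurin, and the key technical input you are missing is a bound on $\mathrm{Li}_{1-s}(e^z)$ for $|\Im s|\to\infty$, obtained via Jonqui\`ere's formula and Hurwitz-zeta estimates; without it the remainder integral cannot be controlled uniformly in~$z$. Your instinct on the odd exponent is right: the central window has half-width $N^{-39/112}$, so the cubic Taylor error is $N\cdot N^{-117/112}=N^{-5/112}$, while at the window edge the Gaussian decay $\exp(-cN^{34/112})$ just beats the Mellin remainder $O(N^{33/112})$ on the tails. Finally, the delicate region is not near $t=0$ but the strip $-N^{-7/8}\le\Re z\le 0$ near the imaginary axis, where the Mellin expansion breaks down and the product must be bounded by hand; the right half-plane is then handled via the reflection $\prod_j(1-e^{zj/N})^{-1}=(-1)^N e^{-z(N+1)/2}\prod_j(1-e^{-zj/N})^{-1}$.
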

Note that the number under the first radical sign is real and positive.
The period~$p$
of the oscillations is roughly~$32$, as observed by Sills and
Zeilberger~\cite{SiZe13}. It is independent of~$l$, as is the
exponential growth order $b^N$. Moreover, Sills and Zeilberger
found that the successive peaks seem to grow exponentially with
a factor around~$8$. The (asymptotically) true factor
is $b^p\approx 8.81$.
Figures~\ref{fig:1} and~\ref{fig:2} illustrate the quality of the approximation,
which seems to be better for $l=1$ than for $l=2$.
\begin{figure}
\begin{center}
\includegraphics[height=2in]{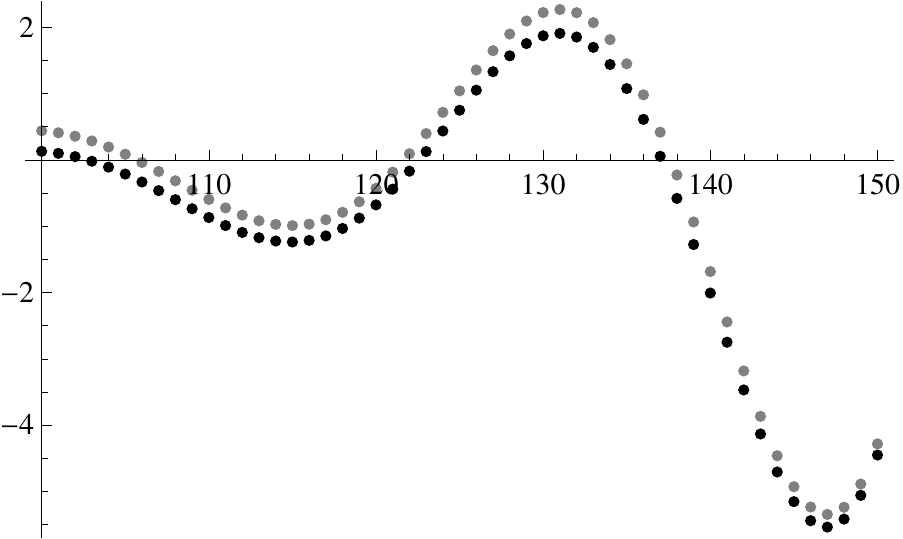}
\end{center}
\caption{\label{fig:1}The numbers $C_{0,1,1}(N)$ (black)
and the approximation~\eqref{eq:C asympt} (gray), for
$N=100,\dots,150$.}
\end{figure}
\begin{figure}
\begin{center}
\includegraphics[height=2in]{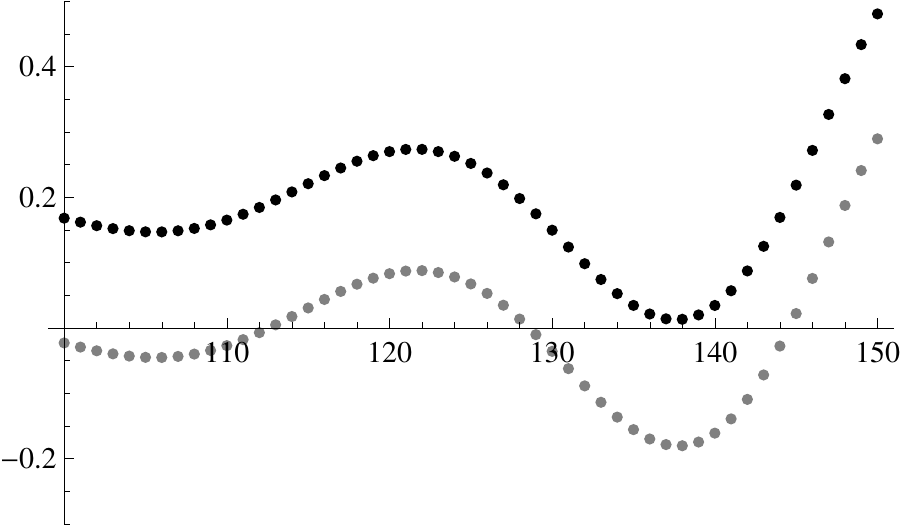}
\end{center}
\caption{\label{fig:2}The numbers $C_{0,1,2}(N)$ (black)
and the approximation~\eqref{eq:C asympt} (gray), for
$N=100,\dots,150$.}
\end{figure}

In principle, it should be possible to extend our approach
from $(h,k)=(0,1)$ to general $h,k$.
Moreover, a natural conjecture is that the period $2\pi/|\arg(1-e^{z_0})|$
of~$H_l$ is a transcendental number.
While there is some literature on transcendence of polylogarithm values
(see, e.g.,~\cite{Ha93}), we are not aware of any result that would imply this.

The rest of the paper is organized as follows. In Section~\ref{se:mellin},
we appeal to the Cauchy integral representation of $C_{0,1,l}(N)$
and find an asymptotic approximation for its integrand.
The new integrand is analysed in Section~\ref{se:saddle}
by the saddle point method. Section~\ref{se:right} completes the proof
of Theorem~\ref{thm:main} by adding estimates in regions
where the asymptotic approximation for the integrand has to be
modified or is invalid. In the conclusion, we comment on the error term
in~\eqref{eq:C asympt}, and on possible future work.

\section{Mellin Transform asymptotics}\label{se:mellin}

Since the $C_{0,1,l}(N)$ are the Laurent coefficients of
$\prod_{j=1}^N (1-x^j)^{-1}$ at $x=1$, we
can express them by Cauchy's formula:
\begin{align}
   C_{0,1,l}(N) &= \frac{1}{2i\pi} \oint x^{l-1} \prod_{j=1}^N \frac{1}{1-(x+1)^j}dx \notag \\
   & =  \frac{(-1)^{l-1}}{N^l} \frac{1}{2i\pi} \oint F(z,N) dz, \label{eq:F}
\end{align}
where we have substituted $x+1=e^{z/N}$, and
\begin{equation}\label{eq:F def}
  F(z,N) := e^{f(z,N)} := e^{z/N}N^{l-1}(1-e^{z/N})^{l-1}\prod_{j=1}^N \frac{1}{1-e^{zj/N}}.
\end{equation}
We wish to replace the integrand~$F$ by an asymptotic approximation, 
derived by a Mellin transform approach.
We do the analysis for $\Re z<0$, since the factor~$e^{-Nz/2}$ in
\begin{equation}\label{eq:refl}
  \prod_{j=1}^N \frac 1{1-e^{zj/N}} 
  = (-1)^N e^{-z(N+1)/2} \prod_{j=1}^N \frac 1{1-e^{-zj/N}}
\end{equation}
suggests that the contribution of the left half-circle dominates
the integral~\eqref{eq:F}; a rigorous argument is given in Section~\ref{se:right}.
To take the Mellin transform of $f=\log F$ w.r.t.~$N$, we have to interpolate between integral values of~$N$. We therefore rewrite the
logarithm of the product $\prod_{j=1}^N$ in~\eqref{eq:F def} as follows:
\begin{align*}
 g(z,N)  &:= \log \prod_{j=1}^n  \frac{1}{1-e^{zj/N}} \\ 
 &= - \sum_{j=1}^N \log(1-e^{zj/N}) \\
  &= \sum_{j=1}^N \sum_{k=1}^\infty \frac{1}{k}e^{zjk/N} 
 = \sum_{k=1}^\infty \frac{1}{k} \frac{1-e^{kz}}{e^{-kz/N}-1}.
\end{align*}
Now we can compute the Mellin transform of~$g$ w.r.t.~$N$, for $\Re(s)<-1$:
\begin{align}
 \mathcal{M}g(z,\cdot)(s)&=\int_0^\infty g(z,x)x^{s-1}dx \notag \\
&=  \sum_{k=1}^\infty \frac{1-e^{kz}}{k}
    \int_0^\infty \frac{x^{s-1}}{e^{-kz/x}-1} dx \notag \\
  &= \sum_{k=1}^\infty  \frac{1-e^{kz}}{k} (-kz)^s \Gamma(-s)\zeta(-s) \notag \\
  &= (-z)^s \Gamma(-s)\zeta(-s) \left(\sum_{k=1}^\infty k^{s-1}
  - \sum_{k=1}^\infty  k^{s-1} e^{kz}  \right) \notag \\
  &= (-z)^s \Gamma(-s)\zeta(-s) \Big(\zeta(1-s) - \mathrm{Li}_{1-s}(e^z)\Big).
  \label{eq:mellin}
\end{align}
Recall that the polylogarithm is
defined for $|w|<1$  and $\nu\in\mathbb{C}$ by $\mathrm{Li}_\nu(w) = \sum_{k\geq1}w^k/k^\nu$.
For the integral evaluation used in the third equality,
see Titchmarsh~\cite{Ti86}, p.~18; it already appears in Riemann's
original memoir~\cite{Ri59}.
By the Mellin inversion formula~\cite{FlGoDu95},  $g$ can be recovered by
\begin{equation}\label{eq:g mell}
  g(z,N) = \frac{1}{2i\pi} \int_{-3/2-i\infty}^{-3/2+i \infty} \mathcal{M}g(z,\cdot)(s) N^{-s}ds.
\end{equation}
We now move the integration line to the right and collect residues.
To estimate the resulting integral (and justify Mellin inversion),
we first establish a bound on
$ \mathrm{Li}_{1-s}(e^z)$ for $|\Im s|$ large. Note that Pickard~\cite{Pi68}
studied asymptotics of $\mathrm{Li}_\nu(w)$ for $\nu\to0$ and $\nu\to\infty$,
and wrote that ``little is known about behavior in the $\nu$-plane
except along and near the line $(0,\infty)$.''
\begin{lemma}\label{le:Li}
  Suppose that $z$ is bounded, bounded away from~$0$ and $\pm 2i\pi$,
  $|\Im z|<8$, and $\Re z \leq 0$. Then,
  for $\Re s>1$ fixed and $|\Im s|\to\infty$, we have
  \[
     \mathrm{Li}_{1-s}(e^z) = O( |\Im s|^{\Re s -1/2}).
  \]
\end{lemma}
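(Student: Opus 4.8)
The plan is to use the contour-integral (Hankel-type) representation of the polylogarithm that is valid for $\nu = 1-s$ with $\Re\nu < 0$, namely
\[
  \mathrm{Li}_{1-s}(e^z) = \frac{\Gamma(s)}{2\pi i}\int_{\mathcal H} \frac{(-t)^{-s}}{e^{t-z}-1}\,dt,
\]
where $\mathcal H$ is a Hankel contour encircling $[0,\infty)$; equivalently, one may write $\mathrm{Li}_{1-s}(e^z)=\frac{1}{\Gamma(1-s)\cdot\text{(something)}}$—but the cleanest route is the Lindel\"of-type integral. Alternatively, and more in the spirit of the Mellin machinery already set up, I would start from the inverse Mellin representation of $\mathrm{Li}_{1-s}(e^z)$ itself, or simply from the series $\sum_{k\ge1}k^{s-1}e^{kz}$ and apply the Abel–Plana summation formula or a direct Mellin–Barnes step to trade the sum for an integral whose $|\Im s|$-dependence is transparent.

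Concretely, I would proceed as follows. First, fix $\sigma := \Re s > 1$ and write $s = \sigma + i\tau$ with $\tau \to \pm\infty$. Second, represent $\mathrm{Li}_{1-s}(e^z) = \sum_{k\ge1} k^{s-1}e^{kz}$ and apply the Mellin–Barnes / Hankel representation so that the quantity becomes $\Gamma(s)$ times a contour integral of $(-t)^{-s}(e^{t-z}-1)^{-1}$; the growth in $\tau$ then comes from two competing sources: the factor $\Gamma(s) = \Gamma(\sigma+i\tau)$, which by Stirling decays like $|\tau|^{\sigma-1/2}e^{-\pi|\tau|/2}$, and the contour integral, where deforming $\mathcal H$ and exploiting that $z$ is bounded away from $0$ and $\pm 2\pi i$ with $|\Im z| < 8$ lets the integrand's oscillation $(-t)^{-i\tau}$ be controlled, producing a compensating factor of order $e^{\pi|\tau|/2}$ from steepest-descent through the relevant saddle near $t \approx z$ or along the vertical part of the contour. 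Multiplying, the $e^{\pm\pi|\tau|/2}$ factors cancel and one is left with the polynomial factor $|\tau|^{\sigma-1/2}$, i.e.\ $O(|\Im s|^{\Re s - 1/2})$. The hypotheses $|\Im z| < 8$ and "bounded away from $\pm 2i\pi$" are exactly what is needed to keep the pole of $(e^{t-z}-1)^{-1}$ at $t = z + 2\pi i m$ off the contour and to bound the contour integral uniformly.

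I expect the main obstacle to be making the cancellation of the two exponential factors $e^{\pm\pi|\tau|/2}$ rigorous and uniform: one must track carefully, via Stirling's formula with explicit error control, the argument of $(-t)^{-s} = \exp(-s\log(-t))$ along the deformed Hankel contour, identify the stationary point of the phase, and verify that the steepest-descent estimate there (including the Gaussian width, which contributes the "$-1/2$" in the exponent) holds uniformly for $z$ in the stated compact region and for all large $|\tau|$. A cleaner alternative that sidesteps the saddle-point bookkeeping is to invoke the known connection formula expressing $\mathrm{Li}_{1-s}(e^z)$ for $\Re(1-s)<0$ in terms of Hurwitz zeta values, $\mathrm{Li}_{1-s}(e^z) = (2\pi)^{-s}\Gamma(s)\bigl(e^{i\pi s/2}\zeta(s,\tfrac12+\tfrac{z}{2\pi i}) + e^{-i\pi s/2}\zeta(s,\tfrac12-\tfrac{z}{2\pi i})\bigr)$ (Lerch/Hurwitz form), and then estimate each Hurwitz zeta factor by its Euler–Maclaurin expansion: $\zeta(s,w)$ stays bounded for $\Re s>1$ with $w$ in a compact set avoiding the nonpositive reals, while $e^{\mp i\pi s/2} = e^{\mp i\pi\sigma/2}e^{\pm\pi\tau/2}$ contributes $e^{\pi|\tau|/2}$ and $\Gamma(\sigma+i\tau)$ contributes $|\tau|^{\sigma-1/2}e^{-\pi|\tau|/2}$ by Stirling, giving the claimed $O(|\Im s|^{\Re s-1/2})$ immediately. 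Under this route the only things to check are that the shift $w = \tfrac12 \pm z/(2\pi i)$ avoids $\{0,-1,-2,\dots\}$—guaranteed by $z \ne \pm 2\pi i$ and $\Re z \le 0$, $|\Im z|<8$—and a uniform bound on $\zeta(s,w)$ over the compact parameter region, which is routine.
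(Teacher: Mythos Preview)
Your second route—the Jonqui\`ere/Hurwitz connection formula followed by Stirling—is exactly the approach the paper takes, so you have correctly identified the right tool. There is, however, a genuine gap in your sketch: the assertion that ``$\zeta(s,w)$ stays bounded for $\Re s>1$ with $w$ in a compact set avoiding the nonpositive reals'' is false once $w$ leaves the real axis. For $\tau=\Im s\to+\infty$ and $\Im w>0$, already the single term $w^{-s}$ has modulus $|w|^{-\sigma}e^{\tau\arg w}$, which grows exponentially; the full series $\zeta(s,w)$ is then of order $e^{\tau\arg w}$, not $O(1)$.

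What actually makes the argument work (and what the paper verifies) is a sign-matching: writing $q_{\pm}=\tfrac12\pm\log(-e^z)/(2\pi i)$, one has $\Im q_+>0$ and $\Im q_-<0$ throughout the stated $z$-region. For $\tau\to+\infty$ this gives $\zeta(s,q_-)=O(1)$ but $\zeta(s,q_+)=O(e^{\pi\tau/2})$. The point is that in the connection formula the growing factor $\zeta(s,q_+)$ is paired with $i^{s}$, whose modulus is $e^{-\pi\tau/2}$, while the bounded factor $\zeta(s,q_-)$ is paired with $i^{-s}$, of modulus $e^{\pi\tau/2}$; so each product is $O(e^{\pi\tau/2})$, and Stirling for $\Gamma(s)$ contributes the compensating $|\tau|^{\sigma-1/2}e^{-\pi\tau/2}$. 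Your bookkeeping needs this asymmetric treatment of the two Hurwitz terms; a blanket $O(1)$ would also leave you unable to explain why the hypothesis $\Re z\le0$ (which fixes the sign of $\Im q_{\pm}$) is used at all. A smaller point: the Hurwitz argument should be written with $\log(-e^z)$ rather than $z$ itself, since $|\Im z|$ is allowed to exceed $\pi$; the principal-branch normalisation is what guarantees $\Re q_{\pm}>0$ and hence that the defining series has no singular terms.
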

\begin{proof}
  We use the representation
  \begin{equation}\label{eq:Li Hurwitz}
    \mathrm{Li}_{1-s}(e^z) = \frac{\Gamma(s)}{(2\pi)^s}
      \left( i^s \zeta\left(s,\frac12 + \frac{\log(-e^z)}{2i\pi}\right)
      + i^{-s} \zeta\left(s,\frac12 - \frac{\log(-e^z)}{2i\pi}\right)\right),
  \end{equation}
  due to Jonqui{\`e}re~\cite{Jo89}, where
  \[
    \zeta(s,q) = \sum_{k=0}^\infty \frac{1}{(q+n)^s}, \quad \Re s >1, \Re q >0,
  \]
  is the Hurwitz zeta function. 
  First we establish some simple estimates for this function.
  Suppose that $\Im s \to +\infty$ and that $\Im q <0$. Since
  \begin{equation}\label{eq:q+n}
    |q+n|^{-s} = |q+n|^{-\Re s} e^{\Im(s) \arg(q+n)}
  \end{equation}
  and $\arg(q+n)<0$, we obtain
  \[
     |\zeta(s,q)| \leq \sum_{k=0}^\infty |q+n|^{-\Re s} = O(1),
  \]
  for bounded $q$ with $\Re q>0$ and $q$ bounded away from zero.
  If $\Im q >0$, on the other hand, we use the bound $e^{\Im(s) \arg(q+n)}
  \leq e^{\tfrac12 \pi \Im s}$ in~\eqref{eq:q+n} to conclude
  \[
     \zeta(s,q) = O(e^{\tfrac12 \pi \Im s}).
  \]
  Analogous bounds hold for $\Im s\to-\infty$. To apply them to~\eqref{eq:Li Hurwitz},
  note that
  \[
    \Re\left(\frac12 \pm \frac{\log(-e^z)}{2i\pi}\right)>0 \quad \text{and} \quad
    \Im\left(\frac12 + \frac{\log(-e^z)}{2i\pi}\right)>0
  \]
  in the specified range of~$z$.
  For the desired estimate, it now suffices to observe that
   $|i^{\pm s}|=\exp(\mp \tfrac12\pi
    \Im s)$, that the factor $(2\pi)^{-s}$ is $O(1)$, and that we have
  \[
    \Gamma(s) = O(e^{-\tfrac12 \pi |\Im s|} |\Im s|^{\Re s - 1/2})
  \]
  by Stirling's formula.
\end{proof}
We can now find the asymptotics of~$g$ by shifting the integration
in~\eqref{eq:g mell} to the right, where $\Re s =8/7$ turns out to
be a suitable choice.
The polylogarithm $\mathrm{Li}_{1-s}(e^z)$ is an entire function of~$s$.
Moreover, $\zeta(-s)$ has a simple pole at $s=-1$, and $\Gamma(-s)$ has simple
poles at the non-negative integers. Because of the factor $\zeta(1-s)$, the
transform~\eqref{eq:mellin} has a double pole at  $s=0$, which results in
a logarithmic term in the asymptotics of~$g$.
\begin{lemma}\label{le:mellin}
  For $\Re z<0$, the function $f$ defined in~\eqref{eq:F def} has the representation
  \begin{multline}\label{eq:f asympt}
     f(z,N) = \frac{1}{z}\Big( \mathrm{Li}_2(e^z) - \frac{\pi^2}{6}\Big)N
   - \frac12 \log N  \\
    - \frac12 \Big(\log 2\pi    +\log(1-e^{z})
 - \log(-z)\Big) +(l-1)\log(-z)+ h(z,N),
 \end{multline}
  where~$h$ is given by
  \begin{multline}\label{eq:h}
    h(z,N) =  \frac{z(e^z+1)}{24(e^z-1)} \frac1N +\frac{z}{N}
      + \frac{1}{2i\pi}\int_{8/7-i\infty}^{8/7+i\infty}  \mathcal{M}g(z,\cdot)(s) N^{-s} ds \\
                -(l-1) \log(-z) + (l-1) (\log N + \log (1-e^{z/N})). 
  \end{multline}
  The function~$h$ is
  \begin{itemize}
    \item[(i)] uniformly $O(N^{-1/2})$ if $|\arg z|\geq \pi/2+\varepsilon$, $z$ is  bounded away
      from~$0$, and $z=O(N^{1/2})$,
    \item[(ii)] uniformly $O(N^{33/112})$ if $z$ is bounded, bounded away from~$0$ and $\pm 2i\pi$,
      $|\Im z|<8$, and $\Re z < -N^{-7/8}$.
  \end{itemize}
\end{lemma}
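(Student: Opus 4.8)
The plan is to prove the representation \eqref{eq:f asympt}--\eqref{eq:h} by a Mellin shift, and then bound the remainder $h$ separately in the two regimes. From \eqref{eq:F def} one has $f(z,N) = z/N + (l-1)\log N + (l-1)\log(1-e^{z/N}) + g(z,N)$, so everything reduces to the asymptotics of $g$. Starting from the Mellin inversion formula \eqref{eq:g mell}, I would move the contour from $\Re s = -3/2$ to $\Re s = 8/7$. To see that the horizontal segments vanish in the limit (which also legitimizes \eqref{eq:g mell}, via integrability on vertical lines), combine the Stirling decay $|\Gamma(-s)| = O(e^{-\pi|\Im s|/2}|\Im s|^{-\Re s-1/2})$ on horizontal lines with polynomial bounds for $\zeta(-s)$, $\zeta(1-s)$, and for $\mathrm{Li}_{1-s}(e^z)$ — for $1<\Re s\le 8/7$ the latter is Lemma~\ref{le:Li}, while for $\Re s\le 1$ the crude bound $|\mathrm{Li}_{1-s}(e^z)|\le\mathrm{Li}_{1-\Re s}(|e^z|)$ (finite since $\Re z<0$) suffices. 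The shift picks up a residue at $s=-1$ (simple pole of $\zeta(-s)$), at $s=0$ (a double pole, as $\Gamma(-s)$ and $\zeta(1-s)$ each contribute a simple pole there), and at $s=1$ (simple pole of $\Gamma(-s)$). Evaluating these using $\mathrm{Li}_1(e^z)=-\log(1-e^z)$, $\zeta(0)=-\tfrac12$, $\zeta'(0)=-\tfrac12\log 2\pi$ (for the double pole), and $\zeta(-1)=-\tfrac1{12}$, $\mathrm{Li}_0(e^z)=e^z/(1-e^z)$ (at $s=1$) produces, respectively, $\tfrac1z(\mathrm{Li}_2(e^z)-\pi^2/6)N$, then $-\tfrac12\log N-\tfrac12(\log 2\pi+\log(1-e^z)-\log(-z))$, and $\tfrac{z(e^z+1)}{24(e^z-1)}\tfrac1N$; subtracting the displayed terms of \eqref{eq:f asympt}, what is left is exactly $h$ as in \eqref{eq:h}: the line integral over $\Re s=8/7$, the term $z/N$, the residue at $s=1$, and $(l-1)(\log N+\log(1-e^{z/N})-\log(-z))$.

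For part~(i) the line integral $\tfrac1{2i\pi}\int_{8/7-i\infty}^{8/7+i\infty}\mathcal{M}g(z,\cdot)(s)N^{-s}\,ds$ dominates $h$. Writing $|(-z)^{8/7+it}|=|z|^{8/7}e^{-t\arg(-z)}$, and using that the functional-equation growth of $\zeta(-s)$ exactly offsets the Stirling order of $\Gamma(-s)$, so $|\Gamma(-s)\zeta(-s)|=O(e^{-\pi|t|/2})$, that $\zeta(-1/7-it)=O(|t|^{9/14})$, and that $\mathrm{Li}_{-1/7-it}(e^z)=O(1)$ uniformly (since $|\arg z|\ge\pi/2+\varepsilon$ forces $\Re z\le -|z|\sin\varepsilon$, so $|e^z|$ stays bounded away from $1$), the integrand is $O(|z|^{8/7}|t|^{9/14}e^{-|t|(\pi/2-|\arg(-z)|)})$ with $\pi/2-|\arg(-z)|\ge\varepsilon$; hence the line integral is $O((|z|/N)^{8/7})=O(N^{-4/7})$ by $z=O(N^{1/2})$. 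The remaining terms of $h$ are $O(N^{-1/2})$: $\tfrac{z(e^z+1)}{24(e^z-1)}\tfrac1N=O(|z|/N)$ (as $e^z$ is bounded away from $1$), $z/N=O(N^{-1/2})$, and $(l-1)(\log N+\log(1-e^{z/N})-\log(-z))=(l-1)(z/2N+O((z/N)^2))$ since $z/N$ is small.

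For part~(ii) the line integral again dominates, but now $\Re z$ may be as small as $-N^{-7/8}$, so $|e^z|$ lies within $N^{-7/8}$ of $1$ and the crude bound only gives $\mathrm{Li}_{-1/7-it}(e^z)=O(\mathrm{Li}_{-1/7}(|e^z|))=O(N)$ — from $\mathrm{Li}_{-1/7}(r)\sim\Gamma(8/7)(-\log r)^{-8/7}$ as $r\to1^-$ and $-\log|e^z|=|\Re z|>N^{-7/8}$. I would split the $t$-integral at a fixed $T_0$: on $|t|\le T_0$ this bound and the boundedness of the other factors give integrand $O(N)$, hence contribution $O(N^{-1/7})$; on $|t|>T_0$ Lemma~\ref{le:Li} gives $\mathrm{Li}_{-1/7-it}(e^z)=O(|t|^{9/14})$, so the integrand is $O(|t|^{9/14}e^{-|t|(\pi/2-|\arg(-z)|)})$ with $\pi/2-|\arg(-z)|=\arctan(|\Re z|/|\Im z|)\gtrsim N^{-7/8}$ (this is where $|\Im z|<8$ enters). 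Integrating yields $\int_{|t|>T_0}=O(N^{(7/8)(23/14)})=O(N^{161/112})$, so this contributes $O(N^{-8/7+161/112})=O(N^{33/112})$ to $h$; the other terms of $h$ are $O(1/N)$ because $z$ is bounded. The residue bookkeeping in the first part is routine (the double pole at $s=0$ being the most delicate point). The main obstacle is the estimate in~(ii): one must track all $N$-dependencies simultaneously — the location of the region ($\Re z\asymp-N^{-7/8}$), the resulting near-unit modulus of $e^z$ (which degrades the pointwise polylogarithm bound), the $|t|^{\Re s-1/2}$ growth furnished by Lemma~\ref{le:Li}, and the exponential decay rate $\pi/2-|\arg(-z)|$ coming from the $\Gamma$-factor — and verify that with the choice $\Re s=8/7$ these conspire to give precisely the exponent $33/112$.
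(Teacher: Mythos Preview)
Your proposal is correct and follows essentially the same approach as the paper: shift the Mellin inversion contour from $\Re s=-3/2$ to $\Re s=8/7$, collect the residues at $s=-1,0,1$, and then bound the remaining line integral using Stirling, the zeta bounds, and Lemma~\ref{le:Li}. The only notable difference is that in case~(ii) you split the $t$-integral at a fixed height~$T_0$ and use the crude series bound $\mathrm{Li}_{-1/7-it}(e^z)=O(N)$ on the bounded part, whereas the paper writes a single $\int_0^\infty e^{-N^{-7/8}x}x^{9/14}\,dx$ bound; your treatment is slightly more careful near $t=0$ (where the paper tacitly uses that $\mathrm{Li}_{1-s}(e^z)$ is $O(1)$ for bounded $\Im s$ because $e^z$ stays away from~$1$), but both routes give $O(N^{33/112})$.
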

\begin{proof}
    We shift the integration in~\eqref{eq:g mell} to $\Re s=8/7$.
   The residues of~\eqref{eq:mellin} at $s=-1$, $s=0$, and $s=1$ are straightforward
   to compute and yield
  \begin{multline*}
         g(z,N) = \frac{1}{z}\Big( \mathrm{Li}_2(e^z) - \frac{\pi^2}{6}\Big)N
   - \frac12 \log N 
    - \frac12 \Big(\log 2\pi    +\log(1-e^{z})  - \log(-z)\Big)\\
         + \frac{z(e^z+1)}{24(e^z-1)} \frac1N
         +\frac{1}{2i\pi}\int_{8/7-i\infty}^{8/7+i\infty}  \mathcal{M}g(z,\cdot)(s) N^{-s} ds.
  \end{multline*}
   Together with the definition of $f=\log F$ in~\eqref{eq:F}, we obtain~\eqref{eq:f asympt}.
   Except for the integral, it is immediate that all terms in~\eqref{eq:h} satisfy
   the bounds stated in~(i) and~(ii). Note  that
   \[
     \log(1-e^{z/N}) = \log(-z) - \log N + O(N^{-1/2})
   \]
   in both cases (i) and (ii), and that
   \[
    \left |\frac{e^z+1}{e^z-1}\right| 
      = \frac{1+e^{2 \Re z} +2e^{ \Re z} \cos(\Im z)}
        {1+e^{2 \Re z} -2e^{ \Re z} \cos(\Im z)}
      \leq \frac{1+e^{2 \Re z} +2e^{ \Re z} }
        {1+e^{2 \Re z} -2e^{ \Re z} }
  \]
  is bounded. To estimate the integral in~\eqref{eq:h}, we use
  the following well-known equations resp.\ estimates, for $\Re s= 8/7$ and
  $\Im s\to \infty$ ($\Im s<0$ is treated by conjugation):
  \begin{align}
    |N^{-s}| &= N^{-\Re s}, \notag \\
    |(-z)^s| &= |z|^{\Re s} e^{-\Im(s) \arg(-z)}, \label{eq:-z} \\
    |\Gamma(-s)| &\sim \sqrt{2\pi} e^{-\tfrac12\pi \Im s} (\Im s)^{-\Re s - 1/2},  \label{eq:stirling}  \\
    \zeta(-s) &= O((\Im s)^{\Re s+1/2}),  \label{eq:zeta} \\
    \zeta(1-s)&= O((\Im s)^{ \Re s-1/2}).   \label{eq:zeta1}
  \end{align}
  For~\eqref{eq:zeta} and~\eqref{eq:zeta1}, see Titchmarsh~\cite{Ti86}, p.~95.
  In case~(i), we have
  \[
    |\mathrm{Li}_{1-s}(e^z) | \leq \mathrm{Li}_{\Re(1-s)}(e^{\Re z}) = O(1)
  \]
  by the triangle inequality and the analyticity of the polylogarithm
  in the unit disk.
  Since $|\arg z|\geq \pi/2+\varepsilon$, we see from~\eqref{eq:-z}
  and~\eqref{eq:stirling} that the integrand has an exponentially decaying factor
  $\exp(-\Im(s) (\arg(-z) + \tfrac12 \pi )) \leq \exp(-\varepsilon\Im s)$.
  The integral is thus $O(N^{-\Re s}|z|^{\Re s})=O(N^{-4/7})=O(N^{-1/2})$.
  
  In case~(ii), the decay of the exponential bound slows down as~$N$ increases,
  because $\arg(-z)$ may approach $-\pi/2$, and we must also take into
  account the powers of~$\Im s$ in the estimates \eqref{eq:-z}--\eqref{eq:zeta1}
  and Lemma~\ref{le:Li}.
  The boundedness of~$z$ guarantees 
  that $|z|^{\Re s}$ in~\eqref{eq:-z} stays bounded,
  and that $\arg(-z) + \pi/2 \sim -\Re z$ for $\Re z\to0$.
  We can thus bound the integral by a constant multiple of
  \[
    N^{-\Re s}\int_0^\infty e^{-N^{-7/8}x} x^{\Re s-1/2}dx=
     N^{-8/7} N^{23/26}=O(N^{33/112}).
  \]
  Note that the powers of $\Im s$ in~\eqref{eq:stirling} and~\eqref{eq:zeta} cancel, and
  that~\eqref{eq:zeta1} and Lemma~\ref{le:Li} show that the term in parentheses
  in~\eqref{eq:mellin} is $O((\Im s)^{\Re s-1/2})$.
\end{proof}

Lemma~\ref{le:mellin} suggests the approximate integral representation
\begin{multline}\label{eq:new int}
  C_{0,1,l}(N) \\ \approx \frac{(-1)^{l-1}}{N^{l+1/2}(2\pi)^{3/2}i}
  \int_{|z|=5, \Re z\leq 0}
  \frac{(-z)^{l-1/2}}{\sqrt{1-e^z}} \exp\left( \frac{z}{N} + \frac{N}{z}
  \left( \mathrm{Li}_2(e^z) - \frac{\pi^2}{6}\right) \right) dz,
\end{multline}
where~$h$ from~\eqref{eq:f asympt} has been replaced by zero,
except the term $z/N$, which was retained for better accuracy.
Recall that the right half-circle is negligible, as mentioned
above and proved in Section~\eqref{se:right}.
Even for small~$N$, the fit is very good for $l=1$; see Figure~\ref{fig:approx}.
\begin{figure}
\begin{center}
\includegraphics[height=2in]{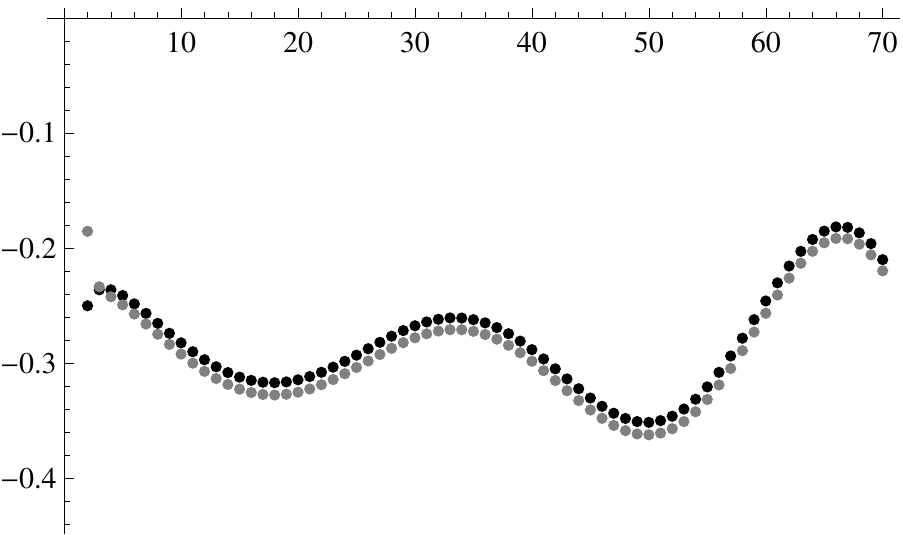}
\end{center}
\caption{\label{fig:approx}The numbers $C_{0,1,1}(N)$ (black)
and the approximation~\eqref{eq:new int} (gray), for
$N=1,\dots,70$.}
\end{figure}

\section{Saddle point asymptotics}\label{se:saddle}

We now proceed by a saddle point analysis of the integral~\eqref{eq:F},
using the approximation of the integrand provided by Lemma~\ref{le:mellin}.
According to this lemma, the factor
$ \exp\left(  \frac1z
  \left( \mathrm{Li}_2(e^z) - \frac{\pi^2}{6}\right) N \right)$
dominates the integrand in~\eqref{eq:F}. Equating its derivative to zero,
we obtain the saddle point~$z_0$ defined in~\eqref{eq:z0}. The argument
of its axis is (see~\cite{deBr58})
\begin{align*}
   a &= \frac{ \pi}{2} - \frac12 \arg \frac{d^2}{dz^2} \left.\left(
    \frac1z  \left( \mathrm{Li}_2(e^z) - \frac{\pi^2}{6}\right)\right)
    \right|_{z=z_0} \\
   &=  \frac{ \pi}{2} - \frac12 \arg\left( \frac{e^{z_0}}{z_0(1-e^{z_0})} \right) \approx 1.79,
\end{align*}
and $\rho=e^{ia}$ is thus the direction of steepest decent. By symmetry, the conjugate
$\bar{z}_0$ is a saddle point, too, and its direction of steepest descent is $\bar{\rho}$.
We now deform the integration
\begin{figure}
\begin{center}
\includegraphics[height=3in]{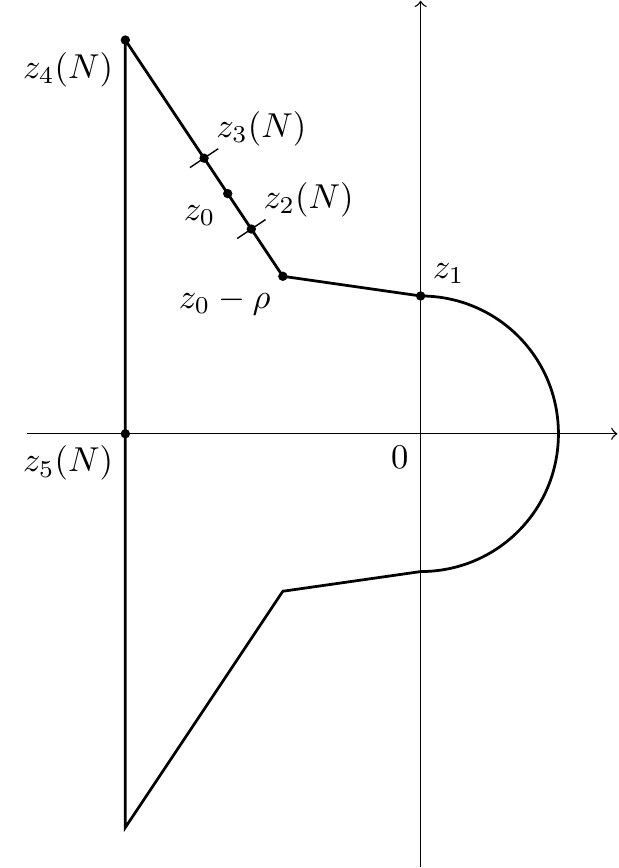}
\end{center}
\caption{\label{fig:path} The new integration contour, passing through
the saddle point~$z_0$. The (upper) dominating part lies between
$z_2(N)$ and $z_3(N)$. Angles and distances have been modified
for better visibility.}
\end{figure}
circle as follows (see Figure~\ref{fig:path}):
In the right half-plane, we stay with a half-circle, of radius~$5$. In the left half-plane,
we connect the point $z_1:=5i$ with the point $ z_0-\rho$ by a straight line. We then proceed by a
line through the saddle point~$z_0$,
up to a point $z_4(N)$. A vertical line then connects this point to the real
axis, to $z_5(N):=-\sqrt{N}$, and so $z_4$ must be
\[
  z_4(N) := -\sqrt{N} + i (\Im z_0-(\sqrt{N}+\Re z_0) \Im \rho / \Re \rho.
\]
Around the saddle point, we identify a range of width $O(N^{-39/112})$,
delimited by the points
\[
  z_2(N):=z_0-\rho N^{-39/112} \quad \text{and} \quad
  z_3(N):=z_0+\rho N^{-39/112}.
\]
In the lower half-plane, the contour is defined symmetrically.
We refer to the line from $z_2$ to $z_3$ to the (upper) central part
of the contour, as it gives the dominant contribution to the
integral (in the upper half-plane).
Note that $-39/112\approx -0.348$ is just a little bit smaller than $-1/3$.
To make the third-order term $N(z-z_0)^3$ in the local expansion of the integrand
negligible, we must have $z-z_0 \ll N^{-1/3}$. It is convenient
to make the central part as large as possible, though, because
this causes faster decrease of $F=e^f$ at $z=z_2(N)$
and $z=z_3(N)$, which
in turn makes it easier to beat the estimate for~$h$ from Lemma~\ref{le:mellin}.
(For details, see the tail estimate in Lemma~\ref{le:tail} below.)

Part~(i) of Lemma~\ref{le:mellin} provides the local
expansion in the central range, where $z=z_0+t\rho$,
$-N^{-39/112}\leq t \leq N^{-39/112}$:
\begin{multline}\label{eq:loc expans}
  f(z,N) = - N \log(1 -e^{z_0}) -\tfrac12 \alpha N t^2 - \tfrac12 \log N \\
     + (l-\tfrac12)\log(-z_0) -\tfrac12 (\log2\pi +\log(1-e^{z_0}))
      + O(N^{-5/112}).
\end{multline}
(Note that the expansion was simplified by using the defining equation~\eqref{eq:z0}
of~$z_0$.) The constant
\[
  \alpha := -\frac{\rho^2e^{z_0}} {z_0(1-e^{z_0})} \approx 0.028
\]
is real and positive. Since
\[
  \int_{-N^{-39/112}}^{N^{-39/112}}\exp(-\tfrac12 \alpha N t^2)dt
  \sim \frac{1}{\sqrt{\alpha N}}\int_{-\infty}^{\infty}e^{-x^2/2}dx
  = \sqrt{\frac{2\pi}{\alpha N}},
\]
with exponentially decaying error term,
the saddle point integral has the asymptotics
\begin{equation}\label{eq:sp int}
  \int_{z_2(N)}^{z_3(N)} e^{f(z,N)}dz =
  \frac{\rho(-z_0)^{l-1/2}}{ \sqrt{\alpha(1-e^{z_0})}}
  \frac1N (1-e^{z_0})^{-N}(1+O(N^{-5/112})).
\end{equation}
The integral over the lower saddle point segment is
\[
  \int_{\bar{z}_3(N)}^{\bar{z}_2(N)}e^{f(z,N)}dz =
    - \overline{\int_{z_2(N)}^{z_3(N)} e^{f(z,N)}dz},
\]
and thus the contribution of both saddle points to the integral~\eqref{eq:F}
is
\[
   \frac{(-1)^{l-1}}{\pi N^l} \Im \left(
     \int_{z_2(N)}^{z_3(N)} e^{f(z,N)}dz \right).
\]
By~\eqref{eq:sp int}, we see that this gives the right hand
side of~\eqref{eq:C asympt}.

To show that the two small line segments containing the saddle points~$z_0$
resp.~$\bar{z}_0$ capture
the asymptotics of the full integral~\eqref{eq:F},
we have to show that the remaining part of the contour in Figure~\ref{fig:path}
is negligible. 
By conjugation, it clearly suffices to consider
the upper half-plane.
We begin with the part that, additionally, lies
in the half-plane $\Re z \leq N^{-7/8}$. In the next section,
we show that the integral over the remaining part tends exponentially to zero.

\begin{lemma}\label{le:tail}
  \begin{multline}\label{eq:tail ints}
     \int_{z_1}^{z_2} \mathbf{1}_{\{\Re z \leq -N^{-7/8}\}}F(z,N) dz
     + \int_{z_3}^{z_4}F(z,N) dz \\
+ \int_{z_4}^{z_5}F(z,N)dz = O\Big(b^N \exp\Big( -\tfrac13 \alpha N^{17/56} \Big)\Big).
  \end{multline}
\end{lemma}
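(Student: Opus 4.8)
The plan is to bound each of the three integrals in \eqref{eq:tail ints} separately, in every case by controlling the real part $\Re f(z,N)$ of the exponent, which governs $|F(z,N)| = e^{\Re f(z,N)}$. The leading term of $f$ from \eqref{eq:f asympt} is $\frac{N}{z}(\mathrm{Li}_2(e^z)-\pi^2/6)$, so I would introduce the auxiliary function
\[
  \varphi(z) := \Re\left( \frac1z\big(\mathrm{Li}_2(e^z)-\tfrac{\pi^2}{6}\big)\right),
\]
so that $|F(z,N)| = \exp\big(N\varphi(z) + O(\log N) + |h(z,N)|\big)$ along the relevant arcs, with $h$ estimated by Lemma~\ref{le:mellin}(i) (giving $O(N^{-1/2})$, which is harmless) on the portions where $|\arg z|\ge \pi/2+\varepsilon$. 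Note that $\varphi(z_0) = \log b$ since $-\log(1-e^{z_0}) = \frac1{z_0}(\mathrm{Li}_2(e^{z_0})-\pi^2/6)$ by \eqref{eq:z0}, and $b^N = \exp(N\varphi(z_0))$; so everything reduces to showing that $\varphi$ on the tail arcs is smaller than $\varphi(z_0)$ by a margin of order $N^{-39/56}$, which, multiplied by $N$, produces the claimed $\exp(-\tfrac13\alpha N^{17/56})$.

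Concretely, for the first integral, along the steepest-descent line $z=z_0+t\rho$ with $N^{-39/112}\le |t|$ (and $\Re z\le -N^{-7/8}$, so part~(i) of Lemma~\ref{le:mellin} applies once $|\arg z|$ is bounded away from $\pi/2$; near the turning point I would instead invoke part~(ii)), the local quadratic expansion \eqref{eq:loc expans} together with convexity-type control on $\varphi$ along this ray gives $N\varphi(z) \le \log b^N - \tfrac12\alpha N t^2 + O(N^{33/112})$. At the endpoint $t = \pm N^{-39/112}$ the quadratic term is $\tfrac12\alpha N^{1-78/112} = \tfrac12\alpha N^{17/56}$, which dominates the $O(N^{33/112})$ error (since $33/112 < 17/56 = 34/112$) and all polynomial prefactors; integrating the resulting bound $b^N\exp(-\tfrac12\alpha N t^2 + o(N^{17/56}))$ along the ray and using monotone decay away from the saddle yields the stated estimate for this piece. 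For the line from $z_3$ onward, and the segments $z_3\to z_4$, $z_4\to z_5$, I would show that $\varphi(z)$ stays strictly below $\varphi(z_0)$ with a fixed positive gap: on the bulk of the straight line from $z_0$ to $z_4(N)$ and on the vertical piece down to $z_5 = -\sqrt N$ one uses the reflection identity \eqref{eq:refl}, which shows that for $\Re z$ very negative the product $\prod(1-e^{zj/N})^{-1}$ is essentially $(-1)^N e^{-z(N+1)/2}$, i.e.\ $|F|$ behaves like $e^{-N\Re z/2}$ up to lower-order factors; along $\Re z = -\sqrt N$ this is $e^{N^{3/2}/2}$-type growth, but one must compare against $b^N$ — actually here one wants the opposite, so I would instead note that on these far segments $\varphi(z) \to 0$ as $\Re z\to-\infty$ is not the right normalization; rather, a direct estimate of $|1-e^{zj/N}|\ge 1 - e^{j\Re z/N}$ shows $|F(z,N)| \le N^{l-1}|1-e^{z/N}|^{l-1}\prod_{j}(1-e^{j\Re z/N})^{-1}$, which is bounded by a fixed constant times a power of $N$ when $\Re z \le -c$, hence negligible compared to $b^N\exp(-\tfrac13\alpha N^{17/56})$ only if we can show $b>1$ gives enough room — and indeed $b^N\exp(-\tfrac13\alpha N^{17/56})$ still grows like $b^N$, so any bound that is $o(b^N)$, in particular polynomial in $N$, suffices for these two segments. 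Thus the segments $z_3\to z_4\to z_5$ are in fact the easy part, controlled crudely by $|1-e^{zj/N}|\ge 1-e^{j\Re z/N}$, and the genuine work is confined to the first integral near the saddle.

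The main obstacle is therefore the passage on the steepest-descent ray between $|t| = N^{-39/112}$ and the endpoints $z_1$, $z_2$ (respectively $z_3$ and the point where the ray meets $z_4$): one needs a uniform statement that $\varphi(z_0 + t\rho)$ is decreasing in $|t|$ along the whole ray, not merely to second order near $t=0$. Establishing this global monotonicity of $\varphi$ along the chosen line — equivalently, that the line from $z_1$ through $z_0$ to $z_4(N)$ never re-enters a region where $\varphi$ climbs back toward $\log b$ — is a property of the specific analytic function $\frac1z(\mathrm{Li}_2(e^z)-\pi^2/6)$ and will require either an explicit analysis of $\nabla\varphi$ on the relevant strip $\{\,0 < |\Im z|, \Re z\le 0,\ |z|\ge 5\text{ or }z\text{ near }z_0\,\}$ or a compactness argument combined with the fact that $\Re z\to-\infty$ forces $\varphi(z)\to 0 < \log b$. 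I expect this to be handled by combining the local expansion \eqref{eq:loc expans} (good for $|t| = O(N^{-39/112})$, with error $O(N^{33/112})$ after multiplying by $N$) with a separate fixed-gap estimate $\sup\{\varphi(z) : z \text{ on the ray}, |z - z_0|\ge \delta\} \le \log b - \eta$ for some $\eta>0$ depending only on $\delta$ (proved by continuity and the behavior at infinity), and then checking that the crossover at $|t|\asymp N^{-39/112}$ is consistent, i.e.\ $\tfrac12\alpha N^{17/56}\le \eta N$ for large $N$, which is automatic. Assembling the three bounds and taking the worst one — the near-saddle contribution $O(b^N\exp(-\tfrac12\alpha N^{17/56} + O(N^{33/112})))$, which is absorbed into $O(b^N\exp(-\tfrac13\alpha N^{17/56}))$ — completes the proof.
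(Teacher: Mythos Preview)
Your overall strategy---control $|F|=e^{\Re f}$ through the function $\varphi(z)=\Re\bigl(\tfrac1z(\mathrm{Li}_2(e^z)-\pi^2/6)\bigr)$, use the quadratic expansion near the saddle to extract the $\exp(-\tfrac12\alpha N^{17/56})$ decay at the endpoints $z_2,z_3$, and verify that $33/112<17/56$ so the Mellin remainder is swallowed---is exactly what the paper does. The paper's execution is simply to check that $\varphi$ is monotone along the contour from $z_1$ to $z_0$ (so the maximum on the first integral occurs at $z_2$) and that $\varphi$ is nowhere larger than $\varphi(z_3)$ on the segment $z_3\to z_4$; this replaces both your fixed-gap compactness argument and your near/far splitting in one stroke.

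There is, however, a genuine error in your middle paragraph. The claim that the crude estimate $|1-e^{zj/N}|\ge 1-e^{j\Re z/N}$ makes $\prod_{j}(1-e^{j\Re z/N})^{-1}$ bounded by a power of~$N$ when $\Re z\le -c$ is false: for fixed $c>0$ one has $-\sum_j\log(1-e^{-cj/N})\sim N\int_0^1-\log(1-e^{-cx})\,dx = \tfrac{N}{c}\int_0^{c}(-\log(1-e^{-u}))\,du$, which is of exact order~$N$, so the product grows like $\exp(N\pi^2/(6c))$ near $z_3$ (where $c\approx 1.6$), i.e.\ roughly $2.8^N\gg b^N$. This crude bound is therefore useless on most of the segment $z_3\to z_4$, and you must fall back on the Mellin asymptotics and the behaviour of $\varphi$ there, as the paper does (and as your own final paragraph implicitly concedes). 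The reflection identity~\eqref{eq:refl} is also a red herring here: it converts the right half-plane to the left, and in the paper is used only in Section~\ref{se:right}, not for these left-half-plane tails. Finally, note that the contour from $z_1=5i$ to $z_2$ includes the straight segment $z_1\to z_0-\rho$, which is \emph{not} on the steepest-descent ray; the monotonicity of $\varphi$ must be checked along the actual contour, not just along $z_0+t\rho$.
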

\begin{proof}
  We begin with the first integral. By part~(ii)
  of Lemma~\ref{le:mellin}, $f=\log F$ satisfies
  \[
    f(z,N) = \frac{1}{z}\Big( \mathrm{Li}_2(e^z) - \frac{\pi^2}{6}\Big)N
      + O(N^{33/112})
  \]
  there. It is straightforward to verify that the function
  \begin{equation}\label{eq:z Re}
   z\mapsto \Re\left(
  \frac{1}{z}\Big( \mathrm{Li}_2(e^z) - \frac{\pi^2}{6}\Big)\right)
  \end{equation}
  increases as $z$ moves along the contour from $z_1$ to $z_0$.
  By~\eqref{eq:loc expans}, we can therefore bound the first integral in~\eqref{eq:tail ints} by 
  \begin{align*}
    \exp &\left(    N \Re \left(     \frac{1}{z}
    \Big(     \mathrm{Li}_2(e^z) - \frac{\pi^2}{6}  \Big)
    \Big|_{z=z_2}
    \right) 
    + O(N^{33/112})  \right)    \\
    &= |1-e^{z_0}|^{-N} \exp\Big( -\tfrac12 \alpha N^{17/56} + O(N^{33/112})\Big) \\
    &=   O\Big(b^N \exp\Big( -\tfrac13 \alpha N^{17/56} \Big)\Big),
  \end{align*}
  where the length of the contour was absorbed into the $O$.
  The second integral in~\eqref{eq:tail ints} can be estimated analogously,
   by part~(i) of Lemma~\ref{le:mellin}.
  the function~\eqref{eq:z Re} decreases only eventually as $z$ moves
  from~$z_3$ to~$z_4$, but it is nowhere larger than at $z_3$, which suffices.

  Finally, we bound the last integral in~\eqref{eq:tail ints}. The function
  $h$ from~\eqref{eq:h} is $o(1)$ here, by part~(i) of Lemma~\ref{le:mellin}. 
  The factor $(e^z-1)^{-1/2}$ is $O(1)$, and $-\pi^2N/(6z)$ is
  $O(N^{1/2})$. The dilogarithm is $\mathrm{Li}_2(e^z) = O(e^{-\sqrt{N}})$,
  hence $(N/z)\mathrm{Li}_2(e^z) =  o(1)$,  and so
  \[
     f(z,N) = (l-\tfrac12)\log(-z) +O(N^{1/2}).
  \]
  As the integral of $(-z)^{l-1/2}$ from~$z_4$ to~$z_5$ grows only
  polynomially, the last integral in~\eqref{eq:tail ints}
  is $\exp(O(\sqrt{N}))$, and we are done.
\end{proof}

\section{Estimates close to the imaginary axis and in the right
half-plane}\label{se:right}

In the preceding section, have gave an asymptotic evaluation of the
integral~\eqref{eq:F}, where the contour was deformed as in 
Figure~\ref{fig:path}, and $\Re z < -N^{-7/8}$. We now show that
the remaining part of the contour is negligible.
Close to the imaginary axis, where $-N^{-7/8} \leq \Re z \leq 0$,
we are outside of the validity region of the Mellin transform asymptotics
of Lemma~\ref{le:mellin}. We thus estimate the integrand
in~\eqref{eq:F} directly.
\begin{lemma}\label{le:iR}
  We have
  \[
    \int_{z_1}^{z_2} \mathbf{1}_{\{\Re z \geq -N^{-7/8}\}}F(z,N) dz = O(0.85^N).
  \]
\end{lemma}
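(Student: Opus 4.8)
The plan is to bound the integrand $F(z,N)$ directly on the tiny arc where $-N^{-7/8}\le\Re z\le0$ and $\Im z$ ranges over an interval containing $5$ (the piece of the contour from $z_1$ to $z_2$ lying near the imaginary axis). The key point is that on this arc $z$ is bounded and bounded away from $0$, so we should return to the original product $\prod_{j=1}^N(1-e^{zj/N})^{-1}$ in \eqref{eq:F def} and estimate its modulus factor by factor, rather than appeal to Lemma~\ref{le:mellin}, whose error term is useless (indeed invalid) in this strip. Since $\Re z\le0$ we have $|1-e^{zj/N}|\ge 1-e^{j\Re z/N}\ge0$, but this is too weak near $\Re z=0$; instead I would use that $z$ has a substantial imaginary part (of size roughly $5$), so that for a positive proportion of the indices $j$ the argument $zj/N$ is bounded away from $2\pi i\mathbb Z$, giving $|1-e^{zj/N}|$ bounded below by an absolute positive constant. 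More precisely, write $|e^{g(z,N)}| = \exp\big(-\sum_{j=1}^N\log|1-e^{zj/N}|\big) = \exp\big(-\tfrac12\sum_{j=1}^N\log|1-e^{zj/N}|^2\big)$ and use $|1-e^{w}|^2 = 1 - 2e^{\Re w}\cos(\Im w) + e^{2\Re w}$ with $\Re w = j\Re z/N \in[-N^{-7/8},0]$, so $e^{\Re w} = 1 + O(N^{-7/8})$ uniformly in $j$. Hence $|1-e^{zj/N}|^2 = 2-2\cos(j\Im z/N) + O(N^{-7/8}) = 4\sin^2(j\Im z/N\cdot\tfrac12) + O(N^{-7/8})$.

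Next I would evaluate $\sum_{j=1}^N\log\big(4\sin^2(\theta j/(2N))\big)$, where $\theta=\Im z$ lies near $5$: this is a Riemann sum for $N\int_0^1\log\big(4\sin^2(\theta t/2)\big)\,dt = \tfrac{2N}{\theta}\int_0^{\theta/2}\log(4\sin^2 u)\,du$. Using the classical antiderivative $\int_0^{x}\log(2\sin u)\,du = -\tfrac12\mathrm{Cl}_2(2x)$ in terms of the Clausen function (equivalently, $\int\log(4\sin^2u)\,du$ relates to $-2\mathrm{Im}\,\mathrm{Li}_2(e^{2iu})$), one identifies the exponential rate: $|F(z,N)|$ grows like $\exp(c(\theta)\,N)\cdot N^{l-1}\cdot(\text{bounded})$ with $c(\theta) = -\tfrac{2}{\theta}\int_0^{\theta/2}\log(4\sin^2u)\,du$. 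A direct numerical check (which the authors are clearly comfortable making, cf. the constants throughout) shows $e^{c(\theta)} < 0.85$ for $\theta$ in a neighbourhood of $5$ — indeed one expects the relevant supremum of $|F|$ over the arc, divided by $N^l$ (from the prefactor in \eqref{eq:F}) and times the $O(N^{-7/8})$ arc length, to be comfortably below $0.85^N$. One must also control the error in replacing $|1-e^{zj/N}|^2$ by $4\sin^2(j\Im z/(2N))$: since each factor contributes an additive $O(N^{-7/8})$ inside a logarithm, and factors with $j\Im z/N$ very close to a multiple of $2\pi$ could a priori blow up, I would split off the $O(1)$ many indices $j$ for which $4\sin^2(j\Im z/(2N)) = O(N^{-7/8})$ and bound their contribution crudely by $\prod(\text{const}\cdot N^{7/8})^{-1} \le N^{O(1)}$, which is absorbed; for the remaining indices the logarithm is Lipschitz and the Riemann-sum-to-integral passage is routine with error $O(1)$ in the exponent. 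Finally multiply in the polynomial prefactor $e^{z/N}N^{l-1}(1-e^{z/N})^{l-1} = O(N^{l-1}\cdot(N^{-7/8}+N^{-1})^{l-1}) = O(N^{-(l-1)/8}) = O(1)$ times the arc length $O(N^{-7/8})$, and divide by $N^l$ as in \eqref{eq:F} — though in fact, since the statement quotes a bound on the bare integral $\int F\,dz$ before the $N^{-l}$ normalisation, only the $e^{c(\theta)N}$ factor against $0.85^N$ really matters.

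The main obstacle is the second-order behaviour of $\Re\big(\tfrac1z(\mathrm{Li}_2(e^z)-\pi^2/6)\big)$, equivalently $c(\Im z)$, across the whole arc $z_1\to z_2$: I have described the estimate at a fixed $\theta\approx 5$, but the arc sweeps $\Im z$ over an interval (ending near $\Im z_0 - \Re\rho^{-1}\approx$ a few units) and one needs $e^{c(\theta)}<0.85$ uniformly there. This requires showing the function $\theta\mapsto c(\theta)$ stays below $\log 0.85$ on the relevant interval — a monotonicity/convexity argument for $\int_0^{\theta/2}\log(4\sin^2u)\,du$, or just a rigorous interval-arithmetic bound. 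One should double-check that the contour segment $z_1z_2$ does not stray into a region where $c(\theta)$ exceeds $\log 0.85$; if it does, the radius $5$ or the endpoint geometry in Figure~\ref{fig:path} must be chosen accordingly, but the statement's constant $0.85$ signals the authors have verified this is safe.
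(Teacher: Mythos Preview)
Your strategy is the paper's: abandon the Mellin expansion, bound the product $\prod_{j=1}^N|1-e^{zj/N}|^{-1}$ directly via $|1-e^w|^2 = 1 - 2e^{\Re w}\cos(\Im w) + e^{2\Re w}$, freeze $\Re w\approx 0$ so that the factors become essentially $2-2\cos(j\theta/N)=4\sin^2(j\theta/(2N))$, and then evaluate the resulting sum of logarithms. The paper also simplifies your uniformity worry by adjusting the contour to run \emph{horizontally} at $\Im z = 5$ in this short strip (justified by the monotonicity already used in Lemma~\ref{le:tail}), so only the single value $\theta = 5$ enters.

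There is, however, a genuine gap in your treatment of the small-$j$ range. You assert that only $O(1)$ indices satisfy $4\sin^2(j\theta/(2N)) = O(N^{-7/8})$; in fact, since $\sin(j\theta/(2N))\sim j\theta/(2N)$ for small $j$, this condition reads $j\lesssim N^{9/16}$, so there are $\asymp N^{9/16}$ such indices. Moreover, your crude lower bound $|1-e^{zj/N}|^2\gtrsim N^{-7/8}$ is false for them: at $j=1$ the true size is $\asymp N^{-2}$. Consequently your claimed contribution $\prod(\mathrm{const}\cdot N^{7/8})^{-1}\le N^{O(1)}$ is not valid. (It is salvageable: a correct bound over these indices is $\prod_{j\le N^{9/16}}|1-e^{zj/N}|^{-1}\le \prod_{j\le N^{9/16}} cN/j = e^{O(N^{9/16}\log N)}=e^{o(N)}$, which is still subexponential and does not spoil the $0.85^N$ conclusion; but this is not what you wrote.)

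The paper handles this cleanly by splitting at $j=N/10$. For $j>N/10$ the replacement $|1-e^{zj/N}|^2 = 2(1-\cos(5j/N))(1+O(N^{-7/8}))$ is legitimate with \emph{bounded relative} error, and the sum $\sum_{N/10\le j\le N}\log(1-\cos(5j/N))$ is evaluated to $-cN+O(1)$ by Euler--Maclaurin (the antiderivative being expressible via $\mathrm{Li}_2(e^{5ix/N})$), giving this sub-product $=O(0.7^N)$. For $j\le N/10$ one cannot pass to $4\sin^2$; instead the paper proves the polynomial inequality $|1-e^{zj/N}|^2\ge \tfrac{11}{12}(j/N)^2((\Re z)^2+25)$ directly from truncated Taylor expansions, whence $\prod_{j\le N/10}|1-e^{zj/N}|^{-1}\le (12/275)^{N/20}N^{N/10}/\lfloor N/10\rfloor!\le 1.19^N$ by Stirling. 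The two pieces multiply to $O(0.84^N)$.
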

\begin{proof}
For simplicity, we assume that $z$ lies on a horizontal line,
so that $\Im z=5$; this is justified, because the monotonicity
used in the proof of Lemma~\ref{le:tail} persists if we adjust the contour like
this in a small neighborhood of~$5i$.
  It thus suffices to show that
  \begin{equation}\label{eq:prod est}
      \prod_{j=1}^N \frac{1}{|1-e^{z j/N}|} = O(0.84^N),
   \end{equation}
  uniformly for $\Im z=5$ and $-N^{-7/8} \leq \Re z \leq 0$,
  because all other factors in~\eqref{eq:F} grow subexponentially.
  A simple calculation yields
  \begin{equation}\label{eq:abs expl}
    \frac{1}{|1-e^{z j/N}|} = \left(1+  e^{2j \Re(z)/N} - 2\cos(5j/N)
    e^{j \Re(z)/N} \right)^{-1/2}.
  \end{equation}
  We divide the product~\eqref{eq:prod est}
  into $j\leq N/10$ and $j>N/10$. In the latter range,
  we have $1/(1-\cos(5j/N))=O(1)$, and thus
  \begin{align}
    1+  e^{2j \Re(z)/N} - 2\cos(5j/N) e^{j \Re(z)/N}
      &= 2(1-\cos(5j/N))+ O(N^{-7/8}) \notag \\
    &= 2(1-\cos(5j/N))(1+ O(N^{-7/8})). \label{eq:j big}
  \end{align}
  Now $(1+ O(N^{-7/8}))^N$ grows subexponentially and can be ignored (by rounding
  up the exponential factor we finally obtain slightly). The remaining product
  \begin{equation}\label{eq:prod sum}
    \prod_{N/10 \leq j\leq N}(1-\cos(5j/N))^{-1/2} =
    \exp\left(-\frac12 \sum_{N/10 \leq j\leq N} \log(1-\cos(5j/N)) \right)
  \end{equation}
  can be treated by Euler's summation formula.
  We have, with $\phi(x):=-\log(1-\cos(5x/N))$,
  \begin{equation}\label{eq:euler}
    \sum_{N/10 \leq j\leq N} \phi(j) = \int_{\lfloor N/10 \rfloor}^{N+1} \phi(x)dx
    -\frac12(\phi(N+1)-\phi(\lfloor N/10 \rfloor)) +
    \int_{\lfloor N/10 \rfloor}^{N+1}(\{x\}-\frac12)\phi'(x)dx.
  \end{equation}
  The term $-\tfrac12(\dots)$ is clearly $O(1)$.
  Since $\phi'(x) = -5/N \cot(5x/2N)$, the last integral can be estimated by
  \begin{align*}
    \int_{\lfloor N/10 \rfloor}^{N+1} | \phi'(x) | dx
    &= \int_{ N/10 }^{N+1} | \phi'(x) | dx  + O(1) \\
    &= -\int_{ N/10 }^{\pi N/5}\phi'(x) dx + \int_{\pi N/5}^{N+1}\phi'(x)dx +O(1) \\
    &= -2\phi(\pi N/5)+\phi(N/10)+\phi(N+1)+O(1) = O(1).
  \end{align*}
  The main integral in~\eqref{eq:euler} can be done in closed form
  (with Mathematica, e.g.):
  \[
    \int \phi(x) dx = -\frac{5i x^2}{2N} + 2x \log(1-e^{5ix/N})
    - x \log(1-\cos(5x/n)) - \frac25 i N \mathrm{Li}_2(e^{5ix/N}).
  \]
  From this we easily deduce
  \[
     \int_{\lfloor N/10 \rfloor}^{N+1} \phi(x)dx = -c N + O(1),
  \]
  where
  \begin{multline*}
    c = \frac{1}{40}(99 i +8\log(1-e^{i/2}) -80 \log(1-e^{5i})
    -4\log(1-\cos(1/2)) \\
     + 40 \log(1-\cos 5)     -16i \mathrm{Li}_2(e^{i/2})
    +16i \mathrm{Li}_2(e^{5i})) \approx 0.11262.
  \end{multline*}
  Inserting all this into~\eqref{eq:prod sum} yields
  \[
    \prod_{N/10 \leq j\leq N}(1-\cos(5j/N))^{-1/2} = \exp(-\tfrac12 cN + O(1)),
  \]
  and therefore (without forgetting the factor~$2$ in~\eqref{eq:j big})
  \begin{equation}\label{eq:prod j big}
    \prod_{N/10 \leq j\leq N} \frac{1}{|1-e^{z j/N}|} = O(2^{-9N/20} e^{-cN/2}
    (1+\varepsilon)^N ) = O(0.7^N).
  \end{equation}
  
  Now we treat the range $j\leq N/10$. For this we prove an appropriate
  inequality. Noting that $\cos(5j/N)$ is positive, and using truncated
  Taylor series three times, we obtain
  \begin{align*}
    1+  e^{2j \Re(z)/N}& - 2\cos(5j/N)  e^{j \Re(z)/N} 
    \geq 1+\left[1+x+\frac{x^2}{2}+\frac{x^3}{6} \right]_{x=2j \Re(z)/N} \\
    &- 2\left[1-\frac{x^2}{2}+\frac{x^4}{24} \right]_{x=5j/N} \cdot
    \left[1+x+\frac{x^2}{2} \right]_{x=j \Re(z)/N}.
  \end{align*}
  This can be bounded from below by
  \begin{equation}\label{eq:lower bd}
    \geq \frac{11j^2}{12N^2}(\Re(z)^2+25).
  \end{equation}
  The latter fact is a polynomial inequality with polynomial constraints,
  and can be established by cylindrical algebraic
  decomposition~\cite{Co75}, e.g., with Mathematica.
  Note that the form of~\eqref{eq:lower bd} was guessed from a Taylor expansion
  of~\eqref{eq:abs expl}
  for $j\approx 0$. From~\eqref{eq:abs expl}
  and~\eqref{eq:lower bd} we have the estimate
  \begin{align}
    \prod_{1\leq j\leq N/10} \frac{1}{|1-e^{z j/N}|} &\leq \prod_{1\leq j\leq N/10}
      \sqrt{\frac{12}{11}} \frac{N}{j} ((\Re z)^2 +25)^{-1/2} \notag \\
    &\leq \left( \frac{12}{275} \right)^{N/10} N^{N/10} \lfloor N/10 \rfloor!^{-1}(1+\varepsilon)^N  \notag \\
    & \leq \left(
     \left(\frac{12}{275}\right)^{1/20}(10e)^{1/10}(1+\varepsilon)\right)^N
    \leq 1.19^N, \label{eq:j small}
  \end{align}
  for $N$ large.
  Now multiply~\eqref{eq:prod j big} and~\eqref{eq:j small} to
  get the result.
\end{proof}

Finally, we estimate the integral over the right half-circle in~\eqref{eq:F},
which completes the proof of Theorem~\ref{thm:main}.
By the reflection formula~\eqref{eq:refl}, we can recycle part of the
analysis from the left half-plane.

\begin{lemma}
  \[
   \frac{(-1)^{l-1}}{N^l} \frac{1}{2i\pi} \int_{|z|=5}
     \mathbf{1}_{\{\Re z>0\}} F(z,N) dz = O(0.95^N).
  \]
\end{lemma}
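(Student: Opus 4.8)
The key idea is to use the reflection formula~\eqref{eq:refl} to transfer the integral over the right half-circle $\{|z|=5,\ \Re z>0\}$ to an integral whose integrand is controlled by the analysis already carried out for the left half-plane. Writing the substitution $w=-z$, the product $\prod_{j=1}^N(1-e^{zj/N})^{-1}$ becomes, up to the explicit prefactor $(-1)^N e^{-z(N+1)/2}$, the product $\prod_{j=1}^N(1-e^{-zj/N})^{-1} = \prod_{j=1}^N(1-e^{wj/N})^{-1}$ with $\Re w<0$. Thus the integrand $F(z,N)$ on the right half-circle equals, up to an exponential factor $e^{-z(N+1)/2}$ of modulus $e^{-(N+1)\Re z/2}\le 1$ and subexponential algebraic factors, the value of a function of the same shape as $F(w,N)$ at the mirror point $w=-z$ lying on the left half-circle $\{|w|=5,\ \Re w<0\}$.

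First I would make this reflection explicit: substitute and collect the prefactors, obtaining
\[
  \int_{|z|=5,\ \Re z>0} F(z,N)\,dz
  = (-1)^N \int_{|w|=5,\ \Re w<0} e^{(N+1)w/2}\, \widetilde F(w,N)\, dw,
\]
where $\widetilde F(w,N)$ differs from $F(w,N)$ only in the two explicit factors $e^{z/N}(1-e^{z/N})^{l-1}$, which are subexponential and harmless. Next I would apply Lemma~\ref{le:mellin}(i) to $\widetilde F$ on the part of the left half-circle bounded away from the imaginary axis (where $|\arg w|\ge \pi/2+\varepsilon$), giving $|\widetilde F(w,N)| = \exp\big(N\,\Re(\tfrac1w(\mathrm{Li}_2(e^w)-\pi^2/6)) + O(\sqrt N)\big)$, and handle the short arcs near $\pm 5i$ by the direct product estimate~\eqref{eq:abs expl}--\eqref{eq:prod est} as in Lemma~\ref{le:iR} (here $z$ is bounded away from $0$, so this is in fact easier). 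Multiplying by $|e^{(N+1)w/2}| = e^{(N+1)\Re w/2}$, the exponential rate along the left half-circle becomes
\[
  \Phi(w) := \Re\!\left(\frac1w\Big(\mathrm{Li}_2(e^w)-\frac{\pi^2}{6}\Big)\right) + \frac12\Re w,
\]
and it suffices to check $\sup_{|w|=5,\ \Re w\le 0}\Phi(w) < \log(1/0.95) \approx 0.0513$. This is a one-variable estimate on a semicircle; I would verify it by evaluating $\Phi$ numerically (it is real-analytic and its maximum is attained in the interior of the arc, comfortably below the threshold — note $\Phi$ is negative near $w=\pm 5i$ because the $\mathrm{Li}_2$ term is tiny there while $\tfrac12\Re w$ is slightly negative, and near $w=-5$ one computes $\Phi(-5)\approx -0.9$), so a crude bound such as $\Phi\le -0.05$ on the whole arc, or even a safe rounding to $0.95^N$, goes through.

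The main obstacle is confirming rigorously that $\Phi(w)<\log(1/0.95)$ uniformly on the semicircle $|w|=5,\ \Re w\le 0$ — in other words, that the gain $e^{-(N+1)\Re z/2}$ from the reflection factor is enough to defeat the growth of $|1-e^z|^{-N}\approx e^{N\Re(z^{-1}(\mathrm{Li}_2(e^z)-\pi^2/6))}$ along the right half-circle. I would not attempt an analytic proof of this; instead I would treat it as a compactness argument reducing to a finite numerical check, or bound $|\mathrm{Li}_2(e^w)|$ crudely by $|\mathrm{Li}_2(e^{\Re w})|$ together with $|\Re(1/w)|\le 1/5$ on $|w|=5$, which already yields $\Phi(w)\le \tfrac15\,\mathrm{Li}_2(1) + 0 = \pi^2/30\approx 0.329$ — too weak — so a sharper but still elementary estimate using $\Re w\le 0$ to control $|e^w|\le 1$ and hence $|\mathrm{Li}_2(e^w)-\pi^2/6|$ away from its maximum on most of the arc would be needed. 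Once this bound is in hand, the lemma follows by combining it with the $O(\sqrt N)$ error from Lemma~\ref{le:mellin}(i), the subexponential prefactors, and the $O(1)$ contour length.
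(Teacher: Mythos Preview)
Your overall strategy matches the paper's: apply the reflection formula~\eqref{eq:refl}, treat a thin strip near the imaginary axis by the direct product estimate of Lemma~\ref{le:iR}, and on the rest use the Mellin asymptotics from Lemma~\ref{le:mellin} for the reflected product together with the extra factor $e^{-z(N+1)/2}$. The paper packages the last step slightly differently: instead of your single function $\Phi$, it keeps the two exponential contributions separate and splits the arc once more at $|\arg z|=\tfrac{9}{20}\pi$, using the monotonicity of $z\mapsto \Re\!\big(-\tfrac1z(\mathrm{Li}_2(e^{-z})-\pi^2/6)\big)$ along the half-circle to reduce the bound in each subregion to evaluation at a single endpoint. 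That monotonicity argument is precisely what replaces your proposed ``compactness / numerical check'' of $\sup\Phi$.

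There is, however, a concrete slip in your write-up. For the integral to be $O(0.95^N)$ you need the exponential rate to satisfy $\Phi(w)\le\log 0.95\approx -0.0513$, not $\Phi(w)<\log(1/0.95)\approx +0.0513$; the latter would only give $O(1.053^N)$. Two of your supporting remarks are also off: at $w=\pm 5i$ the term $\Re\!\big(\tfrac1w(\mathrm{Li}_2(e^w)-\pi^2/6)\big)$ is not ``tiny'' (it is roughly $-0.18$, since $e^{5i}$ lies on the unit circle and $\mathrm{Li}_2$ is not small there), and $\Phi(-5)\approx -2.17$ rather than $-0.9$. Fortunately the corrected inequality $\Phi(w)<\log 0.95$ does hold on the relevant arc, so once the sign is fixed your argument goes through; the paper's two-region split with monotonicity is just a clean way to certify this without an appeal to numerics.
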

\begin{proof}
  First consider the range $0< \Re z \leq N^{-7/8}$. All factors
  in front of the products in~\eqref{eq:F def} and~\eqref{eq:refl}
  grow at must subexponentially, and so this part of the integral
  is $O(0.85^N)$ by~\eqref{eq:refl} and~\eqref{eq:prod est}.
  
  On the other hand, for $N^{-7/8}< \Re z \leq 5$, the proof
  of Lemma~\ref{le:mellin} shows that the product in~\eqref{eq:refl}
  satisfies
  \[
    \prod_{j=1}^N \frac{1}{1-e^{-zj/N}} = \exp\left(
    -\frac{1}{z}\Big( \mathrm{Li}_2(e^{-z}) - \frac{\pi^2}{6}\Big)N
    +O(N^{33/112}) \right).
  \]
  The function
  \[
    z \mapsto \Re\left(-\frac{1}{z}
      \Big( \mathrm{Li}_2(e^{-z}) - \frac{\pi^2}{6}\Big) \right)
  \]
  increases as~$z$ moves on the arc from $-5i$ to $5$ and then
  decreases until $5i$. Close to the imaginary axis, where
  $|\arg z|\geq \tfrac{9}{20}\pi$, we thus
  have the bound
  \[
    \left|\exp\left(
    -\frac{1}{z}\Big( \mathrm{Li}_2(e^{-z}) - \frac{\pi^2}{6}\Big)N \right) \right|
    \leq 0.97^N,
  \]
  obtained by inserting $z=5\exp(\tfrac{9}{20}\pi i)$. Taking into account
  the subexponential factors, this portion of the integral
  is $O(0.98^N)$. If $|\arg z|\leq \tfrac{9}{20}\pi$, i.e., $z$ is bounded away
  from the imaginary axis, we get help from the factor $e^{-zN/2}$
  in~\eqref{eq:refl}. It is bounded by its absolute value
  at $z=5\exp(\tfrac{9}{20}\pi i)$, and thus not larger than $0.68^N$.
  Since
  \[
    \left|\exp\left(
    -\frac{1}{z}\Big( \mathrm{Li}_2(e^{-z}) - \frac{\pi^2}{6}\Big)N \right) \right|
    \leq 1.39^N,
  \]
  as found by plugging in $z=5$, the integral for $N^{-7/8}< \Re z$
  is $O(0.68^N \cdot 1.39^N)=O(0.95^N)$.
\end{proof}

\section{conclusion}

The error term that we obtained in~\eqref{eq:C asympt} can be improved a bit
by considering more terms of the local expansion~\eqref{eq:loc expans}
of~$f$ in the saddle
point analysis. Getting the \emph{correct} order of the error term, i.e.,
the next term in the asymptotic expansion of $C_{0,1,l}(N)$, needs some work, though.
As only the first term on the right-hand side of~\eqref{eq:f asympt}
was used to define the saddle point~$z_0$,
the logarithmic terms in~\eqref{eq:f asympt} contribute a non-vanishing
first order term $O(z-z_0)=O(N^{-39/112})$ to the expansion~\eqref{eq:loc expans}.
To improve it, we need to
replace~$z_0$ by a better approximation of the actual saddle point
of the whole integrand $F=e^f$. But then, the tail estimate in
Lemma~\ref{le:tail} becomes more involved,
because not only the width, but also the location of the saddle point
segment depends on~$N$.

Perhaps more importantly, we comment on possible future work.
Recall that Rademacher's conjecture essentially says that the operations
of limit and partial fraction decomposition commute in the present setting.
While our result refutes the conjecture, it does not clarify the relation
between the partial fraction decompositions of $\prod_{j\geq 1}(1-x^j)^{-1}$
and $\prod_{j=1}^N(1-x^j)^{-1}$; it would be surprising if there was none at all. O'Sullivan~\cite{oS12} suspects
that some modified version of the conjecture might hold, and presents
numerical evidence for convergence of $C_{h,k,l}(N)$ in terms
of Ces{\`ar}o means.

\bibliographystyle{siam}
\bibliography{../gerhold}

\end{document}